\newtheorem{thm}{Theorem}[section]
\newtheorem{cor}[thm]{Corollary}
\newtheorem{lem}[thm]{Lemma}
\theoremstyle{definition}
\newtheorem{defn}[thm]{Definition}
\theoremstyle{remark}
\newtheorem{rem}[thm]{Remark}
\numberwithin{equation}{section}
\def\XXint#1#2#3{{\setbox0=\hbox{$#1{#2#3}{\int}$}
  \vcenter{\hbox{$#2#3$}}\kern-.5\wd0}}
\begin{document}

\title[Codimension four taut Riemannian foliation]
{Direct sum for basic cohomology of codimension four taut Riemannian foliation}

\author{Jiuru Zhou}
\address{School of Mathematical Science \\
Yangzhou University \\
Yangzhou,  Jiangsu 225002, China}
\email{zhoujiuru@yzu.edu.cn}


\subjclass[2010]{Primary 53D10; Secondary 53C25, 53D35}

\keywords{Riemannian foliation, basic cohomology, transversal almost complex structure}

\date{\today}

\begin{abstract}
We discuss the decomposition of degree two basic cohomology  for codimension four taut Riemannian foliation according to the holonomy
invariant transversal almost complex structure $J$, and show that $J$ is $C^{\infty}$ pure and full.
  In addition, we obtain an estimate of the dimension of basic $J$-anti-invariant subgroup. These are the foliated version for the corresponding results of T. Draghici et al. (Int. Math. Res. Not. IMRN 1:1¨C17, 2010).
\end{abstract}


\maketitle


\section{Introduction}

In order to study S.K. Donaldson's tamed to compatible question \cite{Don}, T.-J. Li and W. Zhang \cite{LZ} defined two subgroups $H^{+}_J(M),~H^{-}_J(M)$ of the real degree 2 de Rham cohomology group $H^{2}(M;\mathbb{R})$ for a compact almost complex manifold $(M,J)$. They are the sets of cohomology classes which can be represented by $J$-invariant and $J$-antiinvariant real 2-forms respectively. Later, T. Draghici, T.-J. Li and W. Zhang showed in \cite{DLZ} that in dimension four
 $$
  H^{2}(M;\mathbb{R})=H^{+}_J(M)\oplus H^{-}_J(M),
 $$
and they call such almost complex structure $J$ to be $C^{\infty}$-pure and full. This is specifical for four dimension, since A. Fino and A. Tomassini's Example 3.3 in \cite{FT} gives a six dimensional almost complex manifold $(M,J)$ with $J$ being not $C^{\infty}$-pure, and higher dimensional non-$C^{\infty}$-pure examples can be obtained by producting it with another almost complex manifold (see Remark 2.7 in \cite{DLZ}). It becomes nature to ask when will the almost complex structure be $C^{\infty}$-pure and full on higher dimension. Inspired by the work of Y. Kordyukov, M. Lejmi and P. Weber \cite{KLW} who generalized the Seiberg-Witten invariants onto codimension four Riemannian foliations, this article lays the groundwork for the case in which the higher dimensional manifold admits a codimension four taut Riemannian foliation $\mathcal{F}$. The main result is Theorem \ref{main} which basically says a transversal almost complex structure $J$ on a codimension four taut Riemannian foliated manifold satisfying $\theta(V)J=0,~\forall~V\in \Gamma T\mathcal{F}$ is $C^{\infty}$-pure and full in the sense of Definition \ref{defn}.

The structure of this article is the following: Section 2 are notions of transverse structures, basic forms, characteristic form and filtrations needed later. We consider the compatibility of transversal almost complex structure with a taut Riemannian metric in section 3. After these preliminaries, basic $J$-(anti)invariant cohomology groups naturally come out and so is $C^{\infty}$-pureness and fullness of $J$. After some lemmas similar to those in \cite{DLZ}, we are able to proof the decomposition of the real degree 2 de Rham cohomology group in section 4. The last section provides bounds on the dimension of $J$-(anti)invariant cohomology groups.

\section{Taut Riemannian Foliation}
Let's first recall some definitions and results in foliations, the below in this section is referred to \cite{Ton}. Let $M$ be a closed oriented smooth manifold of dimension $n=p+q$ endowed with a codimension $q$ foliation $\mathcal{F}$. The integrable subbundle $T\mathcal{F}\subset TM$ is given by vectors tangent to plaques, then we further have the rank $q$ normal bundle defined as the quotient bundle $Q=TM/T\mathcal{F}$ and the projection
 \begin{eqnarray*}
  \pi:~TM  &\rightarrow&  Q\\
       Y   &\mapsto&      \pi(Y)
 \end{eqnarray*}
   denoted by $\overline{Y}=\pi(Y)$.

Define the $\Gamma T\mathcal{F}$-action on $\Gamma Q$ as
 $$
  \theta(V)s=\overline{[V,Y_s]}~~\textrm{for}~~V\in\Gamma T\mathcal{F},~s\in\Gamma Q,
 $$
where $Y_s\in\Gamma TM$ is any choice with $\overline{Y}_s=s$. It can be checked that the definition $\theta(V)s$ is independent of the choice of $Y_s$.

Consider a Riemannian metric $g=g_{T\mathcal{F}}\oplus g_{T\mathcal{F}^{\perp}}$ on $M$ splitting $TM$ orthogonally as $TM=T\mathcal{F}\oplus T\mathcal{F}^{\perp}$, which means there is a bundle map $\sigma : Q \stackrel{\cong}{\rightarrow} T\mathcal{F}^{\perp} \subset TM$ splitting the exact sequence
 $$
  0\rightarrow T\mathcal{F} \rightarrow T M \rightarrow Q \rightarrow 0,
 $$
i.e. satisfying $\pi \circ \sigma$=identity. This induces a metric on $Q$ by $g_{Q}=\sigma^{*} g_{T\mathcal{F}^{\perp}}$, then the splitting map $\sigma:~(Q,g_Q)\rightarrow (T\mathcal{F}^{\perp},g_{T\mathcal{F}^{\perp}})$ is a metric isomorphism.

Suppose $\nabla^M$ is the Levi-Civita connection induced by the Riemannian metric $g$ on $M$. For $s\in \Gamma Q$, define
 $$
  \nabla_{X} s=\left\{\begin{array}{ll}{\pi\left[X, \sigma(s)\right]} & {\text { for } X \in \Gamma T\mathcal{F}} \\ {\pi\left(\nabla_{X}^{M} \sigma(s)\right)} & {\text { for } X \in \Gamma T\mathcal{F}^{\perp}}\end{array}\right.,
 $$
then $\nabla$ is an adapted connection in $Q$, which means $\nabla$ restricting along $T\mathcal{F}$ is the partial Bott connection.

Consider the $Q$-valued bilinear form on $T\mathcal{F}$, i.e. $\alpha : T\mathcal{F} \otimes T\mathcal{F} \rightarrow Q$ given by
 $$
  \alpha(U, V)=\pi\left(\nabla_{U}^{M} V\right) \quad \text { for } U, V \in \Gamma T\mathcal{F}.
 $$

A calculation shows for $s\in \Gamma Q$,
 \begin{equation}\label{bilinear}
  \left(\theta(Y) g_{T\mathcal{F}}\right)(U, V)=-2 g(Y, \alpha(U, V)).
 \end{equation}

The Weingarten map $W(s) : T\mathcal{F} \rightarrow T\mathcal{F}$ is defined by
 $$
  g_{Q}(\alpha(U, V), s)=g(W(s) U, V).
 $$
Then Tr$W\in \Gamma Q^{\ast}$, and it can be extended to a 1-form $\kappa\in \Omega^1(M)$ by setting $\kappa(V)=0$ for $V \in \Gamma T\mathcal{F}$, where we have used the identification $T\mathcal{F}^{\perp} \cong Q$. We call $\kappa$ the mean curvature 1-form of $\mathcal{F}$ on $(M,g)$.

Recall that a Riemannian foliation is a foliation $\mathcal{F}$ with a holonomy invariant transversal metric $g_Q$ on $Q$, i.e.
 $$
  \theta(V)g_Q=0,~\forall~V\in\Gamma T\mathcal{F}.
 $$

The metric $g$ on $(M,\mathcal{F})$ is called bundle-like if the induced metric $g_Q$ is holonomy invariant, i.e., $\theta(V)g_Q=0$ for all $V\in\Gamma T\mathcal{F}$, and a Riemannian foliation $\mathcal{F}$ is called taut if there exists a bundle-like metric for which the mean curvature 1-form $\kappa=0$.

A differential form $\alpha\in \Omega^r(M)$ is basic, if
 $$ i(V)\alpha=0,~\theta(V)\alpha, \forall~V\in\Gamma L. $$
Denote by $\Omega^{\ast}_B=\Omega^{\ast}_B(\mathcal{F})$ the set of all basic forms, and the exterior differential $\mathrm{d}_B=\mathrm{d}|_{\Omega_B}$. By Cartan's magic formula, it can be checked that $(\Omega^{\ast}_B,\mathrm{d}_B)$ forms a sub-complex of the de Rham complex $(\Omega^{\ast},\mathrm{d})$. The corresponding cohomology
 $$H^{\ast}_B(\mathcal{F})=H^{\ast}_B(\mathcal{F};\mathbb{R})$$
is called the basic cohomology of $\mathcal{F}$.

If $T\mathcal{F}$ is oriented, the foliation $\mathcal{F}$ is then said to be tangentially oriented. The $p$-form $\chi_{\mathcal{F}}$ defined by
 $$
  \chi_{\mathcal{F}}\left(Y_{1}, \ldots, Y_{p}\right)=\operatorname{det}\left(g\left(Y_{i}, E_{j}\right)_{i j}\right), \forall~ Y_{1}, \dots, Y_{p} \in \Gamma TM,
 $$
is called the characteristic form of $\mathcal{F}$, where $\{E_1,E_2,\cdots,E_p\}$ is a local oriented orthonormal frame.

Consider the multiplicative filtration of the de Rham complex $\Omega^{\ast}=\Omega^{\ast}(M)$ as follows
 $$
  F^r\Omega^m = \{ \alpha\in\Omega^m~|~i(V_1)\cdots i(V_{m-r+1})\alpha=0~~ \textrm{for}~~ V_1,\cdots,V_{m-r+1}\in \Gamma T\mathcal{F} \}.
 $$

Obviously,
 $$
  F^{0} \Omega^{m}=\Omega^{m} \text { and } \quad F^{m+1} \Omega^{m}=0.
 $$

Furthermore, for the foliation $(M,\mathcal{F})$, we have
 \begin{equation}\label{q+1fil}
  F^{q+1} \Omega^{m}=0 \quad(q=\operatorname{codim} \mathcal{F}).
 \end{equation}

\section{Holonomy invariant transversal almost complex structure}

If the foliation $\mathcal{F}$ is of even codimension, and there exists almost complex structure $J$ on $Q$, i.e. an endomorphism $J : Q \rightarrow Q$ such that $J^{2}=-Id_{Q}$, then extend $J$ onto $TM$ by setting $JX=0$ for $X\in T\mathcal{F}$. Such $J$ is called the transversal almost complex structure.

 \begin{lem}
  For an even codimensional Riemannian foliation $(M,\mathcal{F})$ with a taut Riemannian metric $g=g_{T\mathcal{F}}\oplus g_{T\mathcal{F}^{\perp}}$, if there exists a transversal almost complex structure $J$ satisfying $\theta(V)J=0$ for any $V\in \Gamma T\mathcal{F}$ (we call such $J$ to be holonomy invariant), then the new metric $g_J$ defined by
   $$
    g_J(X,Y)=\left\{\begin{array}{ll}{g_{T\mathcal{F}}(X,Y)} & {\text { for } X,Y \in \Gamma T\mathcal{F}} \\ {g_{T\mathcal{F}^{\perp}}(X,Y)+g_{T\mathcal{F}^{\perp}}(JX,JY)} & {\text { for } X,Y \in \Gamma T\mathcal{F}^{\perp}}\end{array}\right.
   $$
  is also taut.
 \end{lem}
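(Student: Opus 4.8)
The plan is to exhibit $g_J$ itself as a bundle-like metric with vanishing mean curvature, which is precisely the assertion that $g_J$ is taut. The first thing to record is structural. Since $g_J$ is defined blockwise along $TM=T\mathcal{F}\oplus T\mathcal{F}^{\perp}$ with no cross terms, the two subbundles remain $g_J$-orthogonal, so $T\mathcal{F}^{\perp}$ is still the $g_J$-orthogonal complement of $T\mathcal{F}$ and the projection $\pi$ onto $Q\cong T\mathcal{F}^{\perp}$ is unchanged. Moreover $g_J|_{T\mathcal{F}}=g_{T\mathcal{F}}$, so the leaf metric is literally the same tensor on $TM$ for $g$ and $g_J$. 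These two facts are what make the comparison below go through.

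For bundle-likeness I would compute the induced transverse metric. Under $\sigma$ one has $g_{J,Q}(s,t)=g_Q(s,t)+g_Q(Js,Jt)$. Applying the holonomy (Bott) derivative $\theta(V)$ for $V\in\Gamma T\mathcal{F}$ and using the Leibniz rule, the first summand contributes $(\theta(V)g_Q)(s,t)=0$ by the hypothesis that $g$ is bundle-like. For the second summand, $\theta(V)(Js)=(\theta(V)J)s+J\theta(V)s=J\theta(V)s$ because $\theta(V)J=0$; feeding this into the Leibniz expansion shows that $\theta(V)\big[g_Q(J\cdot,J\cdot)\big]$ equals $(\theta(V)g_Q)(J\cdot,J\cdot)=0$. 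Hence $\theta(V)g_{J,Q}=0$ and $g_J$ is bundle-like.

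The heart of the matter is the mean curvature, and here I would apply formula \eqref{bilinear} to both metrics. Because the leaf metric $g_{T\mathcal{F}}$ and the splitting are unchanged, the left-hand side $(\theta(Y)g_{T\mathcal{F}})(U,V)$ is the same tensor for $g$ and for $g_J$; comparing the two right-hand sides yields
$$g(Y,\alpha(U,V))=g_J(Y,\alpha_J(U,V))\quad\text{for all } Y\in\Gamma T\mathcal{F}^{\perp},\ U,V\in\Gamma T\mathcal{F},$$
where $\alpha_J$ is the second fundamental form built from the Levi-Civita connection of $g_J$. Choosing a $g_{T\mathcal{F}}$-orthonormal leaf frame $\{E_i\}$, which serves both metrics, and summing over $U=V=E_i$ relates the two mean curvature vectors: $g(Y,H)=g_J(Y,H_J)$ for every $Y\in\Gamma T\mathcal{F}^{\perp}$. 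Since $g$ is taut, $H=0$, so $g_J(Y,H_J)=0$ for all such $Y$; as $H_J\in T\mathcal{F}^{\perp}$ and $g_J$ is positive definite there, this forces $H_J=0$, i.e. $\kappa_{g_J}=0$.

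The main obstacle I anticipate is not the tracing argument but justifying cleanly that the left-hand side of \eqref{bilinear} is metric-independent in the required sense, namely that $(g_J)_{T\mathcal{F}}$ and $(g)_{T\mathcal{F}}$ agree as tensors on all of $TM$ rather than merely on $T\mathcal{F}$; this is exactly where the preservation of the orthogonal splitting is used. Once that identification is settled, the remaining input is the nondegeneracy of $g_J$ on $T\mathcal{F}^{\perp}$, which is immediate because $g_{T\mathcal{F}^{\perp}}(X,Y)+g_{T\mathcal{F}^{\perp}}(JX,JY)$ is positive definite, and the conclusion follows.
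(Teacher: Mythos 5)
Your proof is correct and follows essentially the same route as the paper: the bundle-likeness computation is identical, and your comparison of the two sides of \eqref{bilinear} followed by tracing over a $g_{T\mathcal{F}}$-orthonormal leaf frame is precisely the paper's observation that $\kappa(s)=-\tfrac{1}{2}\sum_{i}\left(\theta(s)g_{T\mathcal{F}}\right)(e_i,e_i)$ depends only on $g_{T\mathcal{F}}$ and the (unchanged) orthogonal splitting, whence $\kappa_J=\kappa=0$. Your explicit attention to the preservation of the splitting and to positive-definiteness of $g_J$ on $T\mathcal{F}^{\perp}$ merely spells out what the paper leaves implicit.
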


 \begin{proof}
  Since $\theta(V)J=0$ for any $V\in \Gamma T\mathcal{F}$ and $g$ is bundle-like,
   $$
    (\theta(V)g_{J,Q})(s,s')=(\theta(V)g_{Q})(s,s')+(\theta(V)g_{Q})(Js,Js')=0,
   $$
  i.e. $g_J$ is also bundle-like.

For the tautness part, let $e_{1}, \dots, e_{n}$ be an orthonormal basis
of $T_xM$ such that $e_{1}, \ldots, e_{p} \in T\mathcal{F}_{x}$ and $e_{p+1}, \ldots, e_{n} \in T\mathcal{F}_{x}^{\perp}$. Then by (\ref{bilinear}), we have the mean curvature 1-form $\kappa$ for $g$,
 \begin{eqnarray*}
  \kappa(s)_{x}&=&\operatorname{Tr} W(s)_{x}\\
               &=&\sum_{i=1}^{p} g\left(W(s) e_{i}, e_{i}\right)\\
               &=&\sum_{i=1}^{p} g_{Q}\left(\alpha\left(e_{i}, e_{i}\right), s\right)\\
               &=&-\frac{1}{2}\sum_{i=1}^{p}\left(\theta(s) g_{T\mathcal{F}}\right)(e_i, e_i),
 \end{eqnarray*}
which shows that $\kappa$ is independent of $g_Q$.

  We denote by $\kappa_J$ the mean curvature 1-form with respect to $g_J$. Since $g$ is taut, $\kappa$ vanishes, and so is $\kappa_J$, i.e. $g_J$ is also taut.
 \end{proof}

In the sequel, we still denote this $g_J$ by $g$, and call the taut Riemannian metric $g$ compatible with $J$. In this case, define the 2-form $F(\cdot,\cdot)=g(J\cdot,\cdot)$, then we have that for any $V\in\Gamma T\mathcal{F}$,
 $$i(V)F=0,$$
and
 \begin{eqnarray*}
  [\theta(V)F](s_1,s_2)=[\theta(V)g](Js_1,s_2)=0,~\forall~s_1, s_2\in Q.
 \end{eqnarray*}
Hence, $F$ is a basic 2-form, and $(\mathcal{F},g,J,F)$ is called a transversal almost Hermitian structure.

\section{$C^{\infty}$-pure and full transversal almost complex structure}

For an even codimensional Riemannian foliation $\mathcal{F}$ on $M$ endowed with a transversal almost complex structure $J$ satisfying $\theta(V)J=0$, $\forall~V\in T\mathcal{F}$, denote by $\Lambda^2_B$ the bundle of real basic 2-forms. Since $\theta(V)J=0$, $\forall~V\in T\mathcal{F}$, we have a well-defined action of $J$ on $\Lambda^2_B$ by:
 \begin{eqnarray*}
  J:~\Lambda^2_B&\rightarrow& \Lambda^2_B\\
  \alpha(\cdot,\cdot)&\mapsto& \alpha(J\cdot,J\cdot).
 \end{eqnarray*}

Then by the formula:
 $$
  \alpha(\cdot,\cdot)=\frac{\alpha(\cdot,\cdot)+\alpha(J\cdot,J\cdot)}{2}
   +\frac{\alpha(\cdot,\cdot)-\alpha(J\cdot,J\cdot)}{2},
 $$
we get a splitting
 $$
  \Lambda_B^{2}=\Lambda_{J}^{+} \oplus \Lambda_{J}^{-},
 $$
where $\Lambda_{J}^{+}$ is the bundle of $J$-invariant basic 2-forms, and $\Lambda_{J}^{-}$ is the bundle of $J$-anti-invariant basic 2-forms.

Let $\Omega^2_B$ be the space of basic 2-forms on $M$, $\Omega_J^{+}$ ($\Omega_J^{-}$) the space of $J$-invariant ($J$-anti-invariant) basic 2-forms.

 \begin{defn}
  Let $\mathcal{Z}_B^2$ be the space of basic closed 2-forms on $M$, and let $\mathcal{Z}_{J}^{\pm}=\mathcal{Z}_B^{2} \cap \Omega_{J}^{ \pm}$. Define
   $$
    H_{J}^{ \pm}(\mathcal{F})=\left\{\mathfrak{a} \in H_B^{2}(\mathcal{F}; \mathbb{R}) ~|~ \exists \alpha \in \mathcal{Z}_{J}^{ \pm} \text { such that }[\alpha]=\mathfrak{a}\right\},
   $$
 \end{defn}
and the dimension of $H_{J}^{ \pm}(\mathcal{F})$ are denoted by $h_J^{\pm}$ respectively.

It is obvious that
 $$
  H_{J}^{+}(\mathcal{F})+H_{J}^{-}(\mathcal{F}) \subseteq H_B^{2}(\mathcal{F}; \mathbb{R}).
 $$

 \begin{defn}\label{defn}
  $J$ is said to be $C^{\infty}$-pure is $H_{J}^{+}(\mathcal{F})\cap H_{J}^{-}(\mathcal{F})=0$, and is said to be $C^{\infty}$-full if $H_{J}^{+}(\mathcal{F})+H_{J}^{-}(\mathcal{F})=H_B^{2}(\mathcal{F}; \mathbb{R})$. $J$ is $C^{\infty}$-pure and full if $H_{J}^{+}(\mathcal{F})\oplus H_{J}^{-}(\mathcal{F})=H_B^{2}(\mathcal{F}; \mathbb{R})$.
 \end{defn}

The main result is the following,
 \begin{thm}\label{main}
  Given a codimension four taut Riemannian foliation $\mathcal{F}$ on a closed smooth manifold $M$, if $J$ is a transversal almost complex structure satisfying $\theta(V)J=0$ for any $V\in \Gamma T\mathcal{F}$, then $J$ is $C^{\infty}$-pure and full.
 \end{thm}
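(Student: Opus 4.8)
The plan is to transplant the four-dimensional argument of Draghici--Li--Zhang to the transverse/basic setting, replacing de~Rham Hodge theory by the basic Hodge theory of the taut foliation. First I would fix, via the Lemma of Section~3, a bundle-like metric compatible with $J$ and having vanishing mean curvature ($\kappa=0$). On a taut Riemannian foliation of a closed manifold the transverse Hodge star $\bar\ast\colon\Omega^r_B\to\Omega^{q-r}_B$ preserves basic forms, and because $\kappa=0$ the basic codifferential is $d_B^\ast=-\bar\ast\, d_B\,\bar\ast$, so the basic Laplacian $\Delta_B=d_Bd_B^\ast+d_B^\ast d_B$ is transversally elliptic. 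I would then invoke the basic Hodge decomposition (every basic class has a unique $\Delta_B$-harmonic representative, and $\bar\ast$ commutes with $\Delta_B$) together with the basic Poincar\'e duality that tautness provides. Since $q=4$, on basic $2$-forms $\bar\ast^2=\mathrm{id}$, giving a splitting $\Omega^2_B=\Omega^{+}\oplus\Omega^{-}$ into transversally self-dual and anti-self-dual basic forms, with harmonic pieces $\mathcal H^{+},\mathcal H^{-}$.

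Next I would record the transverse linear algebra coupling $J$ to $\bar\ast$, exactly as in dimension four. In transverse complex dimension two, $\Lambda^{-}_J$ (real parts of transverse $(2,0)+(0,2)$ forms) lies in the self-dual bundle, while $\Lambda^{+}_J=\mathbb R\,F\oplus[\Lambda^{1,1}_0]$ with $\mathbb R F$ self-dual and the primitive $(1,1)$-part anti-self-dual; in particular the codimension-four miracle holds, namely every anti-self-dual basic form is $J$-invariant. Two consequences drive the proof: a closed basic $J$-anti-invariant form is transversally self-dual, hence (as $\kappa=0$) basic-harmonic; and $J$-invariant and $J$-anti-invariant basic forms are pointwise orthogonal.

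For $C^\infty$-pureness I would argue as follows. Given $\mathfrak a\in H^{+}_J(\mathcal F)\cap H^{-}_J(\mathcal F)$, write $\mathfrak a=[\alpha^{+}]=[\alpha^{-}]$ with $\alpha^{\pm}\in\mathcal Z^{\pm}_J$ and $\alpha^{+}-\alpha^{-}=d_B\beta$. Using the taut integration $\omega\mapsto\int_M\omega\wedge\chi_{\mathcal F}$ on basic $4$-forms, for which Stokes' formula $\int_M d_B\eta\wedge\chi_{\mathcal F}=0$ holds because $\kappa=0$, I would compute $\int_M\alpha^{-}\wedge\alpha^{-}\wedge\chi_{\mathcal F}=\int_M\alpha^{-}\wedge\alpha^{+}\wedge\chi_{\mathcal F}$, the $d_B\beta$-term dropping since $d_B\alpha^{-}=0$. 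The integrand $\alpha^{-}\wedge\alpha^{+}$ vanishes pointwise by type, as $(2,0)\wedge(1,1)=0$ in transverse complex dimension two. Since $\alpha^{-}$ is transversally self-dual, $\alpha^{-}\wedge\alpha^{-}\wedge\chi_{\mathcal F}=\abs{\alpha^{-}}^2\,dV\ge0$, forcing $\alpha^{-}=0$ and hence $\mathfrak a=0$.

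$C^\infty$-fullness is the main obstacle and requires the foliated analogue of the key elliptic input. Let $P^{-}\colon\Omega^2_B\to\Omega^{-}_J$ be the projection onto $J$-anti-invariant basic forms. Because anti-invariance forces self-duality, on $\Omega^{-}_J$ the identity $d_B^\ast=-\bar\ast\,d_B\,\bar\ast$ is proportional to $d_B$, so the ``harmonic space'' of the operator $P^{-}d_B$ is exactly $\mathcal Z^{-}_J$; I would establish the $L^2$-orthogonal, closed-range decomposition $\Omega^{-}_J=\mathcal Z^{-}_J\oplus\mathrm{Im}(P^{-}d_B)$ from the basic elliptic theory. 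Granting this, take a class with basic-harmonic representative $h=h^{+}+h^{-}$ (both summands basic-harmonic, hence closed): the anti-self-dual part $h^{-}$ is $J$-invariant and closed, so $[h^{-}]\in H^{+}_J(\mathcal F)$; for $h^{+}$, decompose $P^{-}h^{+}=\psi_1+P^{-}d_B\beta$ with $\psi_1\in\mathcal Z^{-}_J$, whence $h^{+}-d_B\beta=\phi+\psi_1$ with $\phi$ a $J$-invariant closed and $\psi_1$ a $J$-anti-invariant closed basic form, giving $[h^{+}]\in H^{+}_J(\mathcal F)+H^{-}_J(\mathcal F)$. Thus $H^2_B(\mathcal F;\mathbb R)=H^{+}_J(\mathcal F)+H^{-}_J(\mathcal F)$, and together with pureness the sum is direct. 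I expect the genuine difficulty to lie in setting up this basic Hodge and closed-range machinery correctly --- verifying transversal ellipticity, the $\kappa=0$ identity for $d_B^\ast$, and the validity of basic Stokes against $\chi_{\mathcal F}$ --- rather than in the linear algebra, which is four-dimensional in nature and local along $Q$.
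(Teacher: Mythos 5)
Your pureness argument is essentially the paper's own proof: both reduce to $\int_M \alpha''\wedge\alpha''\wedge\chi_{\mathcal{F}} = \int_M |\alpha''|^2\,\mathrm{d}vol$ via the pointwise type-vanishing of invariant-wedge-anti-invariant and the transversal self-duality $\Lambda^-_J\subset\Lambda^+_{g_Q}$. The one step you assert rather than prove --- the basic Stokes identity $\int_M d_B\eta\wedge\chi_{\mathcal{F}}=0$ --- does \emph{not} follow from $\kappa=0$ alone, because $\chi_{\mathcal{F}}$ need not be closed even for a taut foliation. The paper's mechanism is Rummler's formula $\mathrm{d}\chi_{\mathcal{F}}+\kappa\wedge\chi_{\mathcal{F}}=\varphi_0\in F^2\Omega^{p+1}$ combined with the filtration identity $F^{q+1}\Omega=0$ for $q=4$: the error term $\gamma\wedge\alpha''\wedge\varphi_0$ lies in $F^5\Omega^{p+1}=0$ since $\gamma\wedge\alpha''\in F^3$. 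You correctly flag this as the point needing verification, and it does go through, but Rummler plus the filtration count is the actual foliated content of the pureness half, so it should be spelled out rather than attributed to $\kappa=0$.

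For fullness you genuinely diverge from the paper. The paper defers to the argument of Theorem 2.3 in \cite{DLZ}, prepared by its Lemma~\ref{Hodge1} and the subsequent lemma ($\mathcal{Z}^-_J\subset\mathcal{H}^+_{g_Q}$, $\mathcal{Z}^-_J\cong H^-_J$); that route manipulates the basic Hodge decomposition of $\Delta_B$ (Tondeur's Theorem 7.22, already quoted) and needs no further analysis. You instead posit the $L^2$-orthogonal closed-range decomposition $\Omega^-_J=\mathcal{Z}^-_J\oplus\mathrm{Im}(P^-d_B)$. Granting it, your deduction is correct and in fact cleaner than the iteration in \cite{DLZ}: with $P^-h^+=\psi_1+P^-d_B\beta$ one gets $h^+-d_B\beta=\phi+\psi_1$ where $\psi_1\in\mathcal{Z}^-_J$ is closed by definition and $\phi$ is closed as a difference of closed forms, so $[h^+]\in H^+_J(\mathcal{F})+H^-_J(\mathcal{F})$. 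But the decomposition itself is the whole content, and it is a strictly heavier input than anything in the paper. The formal-adjoint computation only identifies $\mathcal{Z}^-_J$ as the orthogonal complement of the image (using self-duality of anti-invariant forms and $\delta_B=-\overline{\ast}\,d_B\,\overline{\ast}$ when $\kappa=0$); closed range and smooth solvability require transversal ellipticity of the second-order operator $\psi\mapsto P^-d_B\delta_B\psi$ on the rank-two bundle $\Lambda^-_J$ --- the foliated analogue of Lejmi's elliptic operator on $4$-manifolds --- together with El Kacimi-Alaoui's elliptic theory for basic sections, none of which is set up in the paper or in your proposal. So your plan trades an elementary (if fiddly) Hodge-decomposition manipulation that the paper can cite wholesale for an analytic lemma you would still have to build; it is a viable and arguably more conceptual route, but as written the fullness half rests on an unproven foundation, whereas the paper's rests on quoted, standard basic Hodge theory.
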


 \begin{rem}
  The condition that $\theta(V)J=0$ for any $V\in \Gamma T\mathcal{F}$ seems to be necessary. One of the reason is we need this condition to guarantee $J$ preserves basic 2-forms. The other is that for a taut metric, we can easily construct a $J$ compatible taut metric and the corresponding transversal fundamental 2-form will be a basic form.
 \end{rem}

 \begin{rem}
  For a K-contact manifold $(M,\xi,\eta,\phi,g)$, we have proved that $\phi$ is $C^{\infty}$-pure and full \cite{ZZ}. For the characteristic foliation $\mathcal{F}_{\xi}$, $g$ is taut and $\theta(\xi)\phi=0$, so this can be considered as a special case of Theorem \ref{main}.
 \end{rem}

In order to prove Theorem \ref{main}, we do some preparation. Let $g$ be a bundle-like metric inducing $g_Q$ on $Q$. Define the Hodge star operator:
 $$
  \overline{\ast} : \Omega_{B}^{r}(\mathcal{F}) \rightarrow \Omega_{B}^{q-r}(\mathcal{F})
 $$
as follows:
 $$
  \overline{\ast} \alpha=(-1)^{p(q-r)} \ast(\alpha \wedge \chi_{\mathcal{F}}).
 $$

The relation between $\overline{\ast}$ and the Hodge star operator $\ast$ w.r.t. $g$ is \cite{Ton}
 $$
  \ast\alpha=\overline{\ast} \alpha \wedge \chi_{\mathcal{F}}.
 $$

The scalar product in $\Omega_{B}^{r}(\mathcal{F})$ is defined by
 $$
  \langle\alpha, \beta\rangle_{B}=\int_{M} \alpha \wedge \overline{*} \beta \wedge \chi_{\mathcal{F}},
 $$
which is just the restriction of the usual scalar product on $\Omega^{r}(M)$ to the subspace $\Omega_{B}^{r}(\mathcal{F})$ \cite{Ton}.

Define the formal adjoint $\delta_B:~\Omega_{B}^{r}(\mathcal{F}) \rightarrow \Omega_{B}^{r-1}(\mathcal{F})$ of $d_B=d:~\Omega_{B}^{r-1}(\mathcal{F}) \rightarrow\Omega_{B}^{r}(\mathcal{F})$ by
 $$
  \left\langle d_{B} \alpha, \beta\right\rangle_{B}=\left\langle\alpha, \delta_{B} \beta\right\rangle_{B}.
 $$

It was shown in \cite{KT1, Ton} that, on $\Omega_{B}^{r}(\mathcal{F})$
 $$
  \delta_B=(-1)^{q(r+1)+1} \overline{\ast}\left(d_{B}-\kappa \wedge\right) \overline{\ast}.
 $$

Define the basic Laplacian
 $$
  \Delta_{B}=d_{B} \delta_{B}+\delta_{B} d_{B},
 $$
then the harmonic basic $r$-forms $\mathcal{H}^r_B(\mathcal{F})$ are those satisfying $\Delta_{B} \omega=0$.

We have the following Theorem 7.22 in \cite{Ton}
 \begin{thm}
  Let $\mathcal{F}$ be a transversally oriented Riemannian foliation on a closed manifold $(M,g)$. Assume $g$ to be bundle-like with $\kappa \in \Omega_{B}^{1}(\mathcal{F})$. Then there is a decomposition into mutually orthogonal subspaces
   $$
    \Omega_{B}^{r} \cong \operatorname{im} d_{B} \oplus \operatorname{im} \delta_{B} \oplus \mathcal{H}_{B}^{r}
   $$
  with finite-dimensional $\mathcal{H}^r_B$.
 \end{thm}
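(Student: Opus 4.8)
The plan is to obtain the decomposition as transverse Hodge theory for the formally self-adjoint, nonnegative second-order operator $\Delta_B$ acting on the space $\Omega_B^r$ of basic $r$-forms. The hypothesis that $g$ is bundle-like with $\kappa\in\Omega_B^1(\mathcal{F})$ is exactly what makes $\delta_B$, as given by the displayed formula $\delta_B=(-1)^{q(r+1)+1}\overline{\ast}(d_B-\kappa\wedge)\overline{\ast}$, the genuine formal adjoint of $d_B$ for $\langle\cdot,\cdot\rangle_B$; consequently $\Delta_B=d_B\delta_B+\delta_Bd_B$ is formally self-adjoint and $\langle\Delta_B\omega,\omega\rangle_B=\|d_B\omega\|_B^2+\|\delta_B\omega\|_B^2\ge 0$. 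In particular $\Delta_B\omega=0$ if and only if $d_B\omega=0$ and $\delta_B\omega=0$, which I will use at the end to identify $\mathcal{H}_B^r$.

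First I would verify that $\Delta_B$ is transversally elliptic. In a foliated chart with coordinates $(x,y)$, where $x$ parametrizes the plaques and $y$ the transverse directions, a basic form depends only on $y$; the top-order part of $\Delta_B$ is the transverse Hodge Laplacian, whose principal symbol in the transverse covariables $\xi$ is $|\xi|_{g_Q}^2\,\mathrm{Id}$ (the $\kappa\wedge$ term being zeroth order), hence invertible for $\xi\neq 0$. Thus $\Delta_B$ is elliptic in the directions normal to $\mathcal{F}$. Since $\mathcal{F}$ is Riemannian and $M$ is closed, the holonomy-invariant metric $g_Q$ gives uniform transverse control, and one has a transverse Gårding inequality in the scale of basic Sobolev spaces $W_B^s$ (completions of $\Omega_B^r$), of the form $\|\omega\|_{W_B^{s+2}}\le C(\|\Delta_B\omega\|_{W_B^s}+\|\omega\|_{W_B^s})$.

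Next comes the analytic core, and this is the step I expect to be the main obstacle: basic forms are not sections of a vector bundle over $M$, but objects on the (possibly singular) leaf space, so the elliptic package cannot be quoted verbatim and must be set up in the transverse category — this is precisely El Kacimi-Alaoui's theory of transversally elliptic operators on Riemannian foliations. Granting that setup, combining the Gårding estimate with the compactness of the embedding $W_B^{s+2}\hookrightarrow W_B^s$ (Rellich in the transverse variables on the compact base) yields, in the usual way, that $\mathcal{H}_B^r=\ker\Delta_B$ is finite-dimensional and that $\operatorname{im}\Delta_B$ is closed in $\Omega_B^r$. Self-adjointness then gives the orthogonal decomposition $\Omega_B^r=\mathcal{H}_B^r\oplus\operatorname{im}\Delta_B$, since the orthogonal complement of the closed subspace $\operatorname{im}\Delta_B$ is $\ker\Delta_B=\mathcal{H}_B^r$.

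Finally I would split $\operatorname{im}\Delta_B$ algebraically. For $\omega\in\operatorname{im}\Delta_B$, writing $\omega=\Delta_B\eta=d_B(\delta_B\eta)+\delta_B(d_B\eta)$ shows $\operatorname{im}\Delta_B\subseteq\operatorname{im}d_B+\operatorname{im}\delta_B$. The three subspaces are mutually orthogonal: $\langle d_B\alpha,\delta_B\beta\rangle_B=\langle d_B^2\alpha,\beta\rangle_B=0$ because $d_B^2=0$, while for $h\in\mathcal{H}_B^r$ one has $\langle h,d_B\alpha\rangle_B=\langle\delta_B h,\alpha\rangle_B=0$ and $\langle h,\delta_B\beta\rangle_B=\langle d_B h,\beta\rangle_B=0$. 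Hence $\operatorname{im}d_B\oplus\operatorname{im}\delta_B\subseteq\operatorname{im}\Delta_B$ as well, so $\operatorname{im}\Delta_B=\operatorname{im}d_B\oplus\operatorname{im}\delta_B$, yielding the asserted three-fold orthogonal decomposition $\Omega_B^r\cong\operatorname{im}d_B\oplus\operatorname{im}\delta_B\oplus\mathcal{H}_B^r$ with $\mathcal{H}_B^r$ finite-dimensional.
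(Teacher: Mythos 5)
Your proposal is correct and follows essentially the same route as the paper's source: the paper states this result as Theorem 7.22 of Tondeur's \emph{Geometry of Foliations} without reproducing a proof, and Tondeur's argument is precisely the transverse Hodge theory you outline --- formal self-adjointness of $\Delta_B$ under the bundle-like/basic-$\kappa$ hypothesis, transversal ellipticity, and the analytic package (finite-dimensional kernel, closed range, orthogonal splitting) imported from El Kacimi-Alaoui's theory of transversally elliptic operators on Riemannian foliations, followed by the same algebraic identification $\operatorname{im}\Delta_B=\operatorname{im}d_B\oplus\operatorname{im}\delta_B$. You correctly flag the one genuinely delicate point, namely that basic forms are not sections of a bundle over a compact base, so Gårding/Rellich cannot be quoted naively and must pass through the Molino/El Kacimi framework, which is exactly how the cited proof handles it.
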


 \begin{rem}
  The condition $\kappa \in \Omega_{B}^{1}(\mathcal{F})$ can be removed by the basic decomposition of general mean curvature 1-form, see \cite{Lop}.
 \end{rem}

When the taut foliation $\mathcal{F}$ has codimension $q=4$, we have $\overline{\ast}^2=$id on $\Lambda^{2} Q^{*}$, so we get a decomposition
 $$
  \Lambda^{2} \mathrm{Q}^{*}=\Lambda^{+} \mathrm{Q}^{*} \oplus \Lambda^{-} \mathrm{Q}^{*},
 $$
where $\Lambda^{\pm}$ are the $\pm 1$-eigenspace of $\overline{\ast}$. Suppose $\Omega_B^{\pm}$ are the space of sections of $\Lambda^{\pm} \mathrm{Q}^{*}$, and denote by $\alpha^{+},~\alpha^{-}$ the selfdual, anti-selfdual components of a basic 2-form $\alpha$. Furthermore, we have $\Delta_{B} \overline{\ast}=\overline{\ast} \Delta_{B}$ (note that if $\kappa\not=0$, $\Delta_{B}$ and  $\overline{\ast}$ do not commute). Hence,
 \begin{eqnarray}\label{selfdual}
  H^2(\mathcal{F},\mathbb{R})=\mathcal{H}^2_B(\mathcal{F})=\mathcal{H}^{+}_B(\mathcal{F})\oplus\mathcal{H}^{-}_B(\mathcal{F}),
 \end{eqnarray}
 and we denote the dimension of $\mathcal{H}^2_B(\mathcal{F}),~\mathcal{H}^{+}_B(\mathcal{F}),~\mathcal{H}^{-}_B(\mathcal{F})$ by $b_B^2,~b_B^{+},~b_B^{-}$ respectively.

For a codimension four transversal almost Hermitian manifold $(M,\mathcal{F},g,J,F)$, we have the following relation
 \begin{eqnarray}\label{rel}
  &&\Lambda^{+}_{J}=\mathbb{R}F\oplus\Lambda^{-}_{g_Q}, \Lambda^{+}_{g_Q}=\mathbb{R}F\oplus\Lambda^{-}_{J};\\
  &&\Lambda^{+}_{J}\cap\Lambda^{+}_{g_Q}=\mathbb{R}F, \Lambda^{-}_{J}\cap\Lambda^{-}_{g_Q}=0.\nonumber
 \end{eqnarray}
 Hence, similar to \cite{DLZ}, we have the following two lemmas,
 \begin{lem}\label{Hodge1}
  If $\alpha\in\Omega^{+}_B$ and $\alpha=\alpha_h+d\theta+\delta\Psi$ is its basic Hodge decomposition, then $(d\theta)^{+}_B=(\delta\Psi)^{+}_B$ and $(d\theta)^{-}_B=-(\delta\Psi)^{-}_B$. In particular, the basic 2-form
   $$
    \alpha-2(d\theta)^{+}_B=\alpha_h
   $$
  is harmonic and the 2-form
   $$
    \alpha+2(d\theta)^{-}_B=\alpha_h+2d\theta
   $$
  is closed.
 \end{lem}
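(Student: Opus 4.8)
The plan is to apply the basic star operator $\overline{\ast}$ to the given Hodge decomposition and then exploit the uniqueness of the three-fold orthogonal splitting $\Omega_B^2\cong\operatorname{im}d_B\oplus\operatorname{im}\delta_B\oplus\mathcal{H}_B^2$. First I would record the computational engine. Since $\mathcal{F}$ is taut of codimension four, the mean curvature vanishes and the adjoint formula collapses to $\delta_B=-\overline{\ast}\,d_B\,\overline{\ast}$ on every $\Omega_B^r(\mathcal{F})$, while $\overline{\ast}^2=\mathrm{id}$ on basic $2$-forms and $\overline{\ast}^2=-\mathrm{id}$ on basic $1$- and $3$-forms. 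In $\alpha=\alpha_h+d\theta+\delta\Psi$ the form $\theta$ is a basic $1$-form and $\Psi$ a basic $3$-form.

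Next I would compute $\overline{\ast}$ of each summand. Using $\delta_B=-\overline{\ast}\,d_B\,\overline{\ast}$ together with $\overline{\ast}^2=-\mathrm{id}$ on $1$-forms gives $\overline{\ast}\,d\theta=\delta_B(\overline{\ast}\theta)\in\operatorname{im}\delta_B$; dually $\overline{\ast}\,\delta\Psi=-d_B(\overline{\ast}\Psi)\in\operatorname{im}d_B$; and $\overline{\ast}\alpha_h$ is again harmonic because $\overline{\ast}$ commutes with $\Delta_B$ in the taut case. Since $\alpha\in\Omega_B^{+}$ means $\overline{\ast}\alpha=\alpha$, applying $\overline{\ast}$ to the decomposition produces a second basic Hodge decomposition of the same class,
$$
 \alpha=\overline{\ast}\alpha_h+\bigl(-d_B(\overline{\ast}\Psi)\bigr)+\delta_B(\overline{\ast}\theta).
$$
Comparing this with $\alpha=\alpha_h+d\theta+\delta\Psi$ and using uniqueness of the harmonic, exact, and co-exact pieces forces $\alpha_h=\overline{\ast}\alpha_h$ (so $\alpha_h$ is self-dual), $d\theta=-d_B(\overline{\ast}\Psi)$, and $\delta\Psi=\delta_B(\overline{\ast}\theta)$. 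Applying $\overline{\ast}$ to the exact identity and invoking $\delta_B=-\overline{\ast}\,d_B\,\overline{\ast}$ collapses everything to the single key relation $\overline{\ast}\,d\theta=\delta\Psi$, hence also $\overline{\ast}\,\delta\Psi=d\theta$.

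From $\overline{\ast}\,d\theta=\delta\Psi$ the self-dual and anti-self-dual projections drop out at once: $(d\theta)_B^{+}=\tfrac12(d\theta+\overline{\ast}\,d\theta)=\tfrac12(d\theta+\delta\Psi)=(\delta\Psi)_B^{+}$, and $(d\theta)_B^{-}=\tfrac12(d\theta-\delta\Psi)=-(\delta\Psi)_B^{-}$, which are the two claimed equalities. Substituting $2(d\theta)_B^{+}=d\theta+\delta\Psi$ and $2(d\theta)_B^{-}=d\theta-\delta\Psi$ into $\alpha=\alpha_h+d\theta+\delta\Psi$ then yields $\alpha-2(d\theta)_B^{+}=\alpha_h$, which is harmonic, and $\alpha+2(d\theta)_B^{-}=\alpha_h+2d\theta$, which is closed since $\alpha_h$ is harmonic and $d\theta$ is exact.

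The one genuinely delicate point is the bookkeeping of the signs of $\overline{\ast}^2$ in each degree, which is what turns the exact part into a co-exact part and conversely; everything else is formal once the adjoint formula is simplified. I would stress that the harmonic identification $\overline{\ast}\alpha_h=\alpha_h$ relies essentially on tautness, because only then does $\overline{\ast}$ commute with $\Delta_B$ and map $\mathcal{H}_B^2$ into itself; without $\kappa=0$ the second expression above would not be a legitimate Hodge decomposition and the uniqueness argument would fail.
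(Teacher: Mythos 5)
Your proof is correct, and it is essentially the argument the paper intends: the paper omits the proof, citing Draghici--Li--Zhang, and your computation (apply $\overline{\ast}$ to the basic Hodge decomposition, check the sign bookkeeping $\delta_B=-\overline{\ast}\,d_B\,\overline{\ast}$ and $\overline{\ast}^2=\pm\mathrm{id}$ in each degree, then invoke uniqueness of the three orthogonal components) is exactly the DLZ proof transplanted to the transversal setting. Your closing observation that tautness ($\kappa=0$) is what makes $\overline{\ast}$ commute with $\Delta_B$, so that the star of the decomposition is again a legitimate basic Hodge decomposition, correctly identifies the only point where the foliated case needs care beyond DLZ.
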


 \begin{lem}
  Let $(M^{p+4},\mathcal{F},g,J,F)$ be a closed codimension four taut transversal almost Hermitian manifold. Then $\mathcal{Z}^{-}_{J}\subset\mathcal{H}^{+}_{g_Q}$, and $\mathcal{Z}^{-}_{J}\subset H^{-}_{J}$ is bijective. Furthermore, $H^{-}_{J}=\mathcal{Z}^{-}_{J}=\mathcal{H}^{+,F^{\perp}}_{g_Q}$.
 \end{lem}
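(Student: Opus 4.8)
The plan is to prove the chain of statements $\mathcal{Z}^{-}_{J}\subset\mathcal{H}^{+}_{g_Q}$, then $\mathcal{Z}^{-}_{J}\to H^{-}_{J}$ bijective, and finally $H^{-}_{J}=\mathcal{Z}^{-}_{J}=\mathcal{H}^{+,F^{\perp}}_{g_Q}$, exploiting the codimension-four relation (\ref{rel}) together with the self-dual decomposition (\ref{selfdual}). First I would take a closed basic form $\alpha\in\mathcal{Z}^{-}_{J}$, i.e. a $J$-anti-invariant closed basic 2-form. The key structural input is the second line of (\ref{rel}), namely $\Lambda^{-}_{J}\cap\Lambda^{-}_{g_Q}=0$, equivalently $\Lambda^{-}_{J}\subset\Lambda^{+}_{g_Q}$ (since from the first line $\Lambda^{+}_{g_Q}=\mathbb{R}F\oplus\Lambda^{-}_{J}$ already forces the $J$-anti-invariant forms to be self-dual). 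Thus pointwise $\alpha$ is self-dual, i.e. $\overline{\ast}\alpha=\alpha$. Being closed and self-dual on a taut foliation (where tautness lets us arrange $\kappa=0$ so that $\Delta_B$ and $\overline{\ast}$ commute and $\delta_B=\pm\overline{\ast}d_B\overline{\ast}$), $\alpha$ is automatically co-closed: $\delta_B\alpha=\pm\overline{\ast}d_B\overline{\ast}\alpha=\pm\overline{\ast}d_B\alpha=0$. Hence $\alpha$ is harmonic and self-dual, giving $\mathcal{Z}^{-}_{J}\subset\mathcal{H}^{+}_{g_Q}$.

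Next I would address the bijectivity of $\mathcal{Z}^{-}_{J}\to H^{-}_{J}$. By definition $H^{-}_{J}$ consists of those basic classes admitting a representative in $\mathcal{Z}^{-}_{J}$, so the map sending $\alpha\mapsto[\alpha]$ is surjective by construction. For injectivity I would show that two cohomologous forms in $\mathcal{Z}^{-}_{J}$ must coincide: if $\alpha,\alpha'\in\mathcal{Z}^{-}_{J}$ with $\alpha-\alpha'=d_B\beta$ exact, then since both $\alpha$ and $\alpha'$ are harmonic by the first step, their difference is an exact harmonic basic form, which is zero by the orthogonality of $\operatorname{im}d_B$ and $\mathcal{H}^2_B$ in the Hodge decomposition theorem quoted above. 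Therefore the correspondence between $\mathcal{Z}^{-}_{J}$ and $H^{-}_{J}$ is a linear isomorphism.

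For the final identification $H^{-}_{J}=\mathcal{Z}^{-}_{J}=\mathcal{H}^{+,F^{\perp}}_{g_Q}$, the superscript $F^{\perp}$ denotes the subspace of self-dual harmonic basic 2-forms orthogonal to $F$. Since the self-dual harmonic space decomposes as $\mathcal{H}^{+}_{g_Q}=\mathbb{R}[F]\oplus\mathcal{H}^{+,F^{\perp}}_{g_Q}$ (using that $F$ itself is self-dual and harmonic in the taut compatible setting), and since by (\ref{rel}) we have $\Lambda^{+}_{g_Q}=\mathbb{R}F\oplus\Lambda^{-}_{J}$, the $J$-anti-invariant part of the self-dual harmonic forms is exactly the $F$-orthogonal complement. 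I would verify that a self-dual harmonic basic form is $J$-anti-invariant if and only if it is pointwise orthogonal to $F$, using that $g_Q(\cdot,\cdot)$-orthogonality to $F$ characterizes the $\Lambda^{-}_{J}$ summand inside $\Lambda^{+}_{g_Q}$. Combining the three identifications, every element of $H^{-}_{J}$ has a unique harmonic $J$-anti-invariant self-dual representative, establishing the triple equality.

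The main obstacle I anticipate is the careful bookkeeping of the two compatible decompositions of $\Lambda^2_B$ — the $J$-(anti)invariant one and the self-dual/anti-self-dual one — and in particular verifying cleanly that $J$-anti-invariance coincides with self-duality together with $F$-orthogonality at the pointwise level; this is where the explicit structure of relation (\ref{rel}) must be used rather than merely quoted. A secondary technical point is ensuring that tautness genuinely delivers $\kappa=0$ so that $\overline{\ast}$ commutes with $\Delta_B$ and the "closed self-dual $\Rightarrow$ harmonic" step is valid; the excerpt's parenthetical warning that $\Delta_B$ and $\overline{\ast}$ fail to commute when $\kappa\neq0$ signals that this hypothesis is doing real work and must be invoked explicitly.
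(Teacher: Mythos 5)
Your proposal is correct and follows essentially the argument the paper itself relies on: the paper offers no written proof of this lemma, saying only that it holds ``similar to \cite{DLZ}'', and your chain --- pointwise self-duality of $J$-anti-invariant forms via (\ref{rel}), then closed plus self-dual implies harmonic using tautness ($\kappa=0$, so $\delta_B=\pm\overline{\ast}\,d_B\,\overline{\ast}$ and $\Delta_B$ commutes with $\overline{\ast}$), then injectivity of $\alpha\mapsto[\alpha]$ from the orthogonality of $\operatorname{im} d_B$ and $\mathcal{H}^2_B$ in the basic Hodge decomposition --- is exactly the Draghici--Li--Zhang argument transplanted to the foliated setting, with the correct identification of where the hypothesis $\kappa=0$ does real work.

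One side remark in your third step is wrong, though not fatally: in the transversal almost \emph{Hermitian} (as opposed to almost K\"ahler) setting, $F$ need not be closed, hence is in general not harmonic, so ``$\mathbb{R}[F]$'' does not make sense as a cohomology class and the claimed decomposition $\mathcal{H}^{+}_{g_Q}=\mathbb{R}F\oplus\mathcal{H}^{+,F^{\perp}}_{g_Q}$ fails; indeed the paper's Lemma \ref{sinequ} works only with the harmonic part $F_h$ of $F$, and $F$ itself is harmonic only under the additional almost K\"ahler hypothesis $dF=0$ (the paper's final theorem). Fortunately that decomposition is not needed: the equality $\mathcal{Z}^{-}_{J}=\mathcal{H}^{+,F^{\perp}}_{g_Q}$ follows from the purely pointwise orthogonal splitting $\Lambda^{+}_{g_Q}=\mathbb{R}F\oplus\Lambda^{-}_{J}$ of (\ref{rel}): a self-dual harmonic basic $2$-form pointwise orthogonal to $F$ lies in $\Omega^{-}_{J}$ and is closed, hence belongs to $\mathcal{Z}^{-}_{J}$, and conversely every element of $\mathcal{Z}^{-}_{J}$ is self-dual harmonic (by your first step) and pointwise orthogonal to $F$ since $J$ acts isometrically on basic $2$-forms with $JF=F$. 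This is exactly what your final verification paragraph does anyway; simply delete the harmonicity-of-$F$ parenthetical and read $F^{\perp}$ as pointwise orthogonality throughout.
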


With the above preparation, we can present the proof of the main result.\\
\textbf{Proof of Theorem \ref{main}.} Let $g$ be the $J$-compatible metric, and $F$ be the basic 2-form. If $\mathfrak{a}\in H^{+}_J(\mathcal{F})\cap H^{-}_J(\mathcal{F})$, let $\alpha'\in \mathcal{Z}^{+}_J$, $\alpha''\in\mathcal{Z}^{-}_J$ be the representative for $\mathfrak{a}$. Then see page 39 in \cite{Ton},
 $$
  \mathrm{d} \chi_{\mathcal{F}}+\kappa \wedge \chi_{\mathcal{F}}=\varphi_{0} \in F^{2} \Omega^{p+1}.
 $$

Hence, on a codimension four foliation $(M,\mathcal{F})$, for basic 1-form $\gamma$ and basic 2-form $\alpha''$, $\gamma \wedge \alpha^{\prime \prime} \wedge\phi_{0}\in F^{5} \Omega^{p+1}=0$ vanishes. Therefore, by integration by parts, we have
 \begin{eqnarray*}
  0 &=& \int_M \alpha'\wedge\alpha''\wedge\chi_{\mathcal{F}}\\
    &=& \int_M (\alpha''+d_B\gamma)\wedge\alpha''\wedge\chi_{\mathcal{F}}\\
    &=& \int_M \alpha''\wedge\alpha''\wedge\chi_{\mathcal{F}} + \int_M d_B\gamma\wedge\alpha''\wedge\chi_{\mathcal{F}}\\
    &=& \int_M \alpha''\wedge\overline{\ast}\alpha''\wedge\chi_{\mathcal{F}} + \int_M \gamma\wedge d_B\alpha''\wedge\chi_{\mathcal{F}} + \int_M \gamma\wedge\alpha''\wedge \mathrm{d}\chi_{\mathcal{F}}\\
    &=& \int_M |\alpha''|^2_g~\mathrm{d}vol + \int_M \gamma\wedge\alpha''\wedge (\phi_0-\kappa\wedge\chi_{\mathcal{F}})\\
    &=& \int_M |\alpha''|^2_g~\mathrm{d}vol.
 \end{eqnarray*}

Hence, $\alpha''=0$, i.e., $\mathfrak{a}=0$, that's to say $H_{J}^{+}\left(\mathcal{F}\right)\cap H_{J}^{-}\left(\mathcal{F}\right)=0$.

The proof of fullness part is technically almost the same as the proof of Theorem 2.3 in \cite{DLZ}. $\Box$

D. Dom\'{\i}nguez's remarkable theorem \cite{Do} says that for a Riemannian foliation $\mathcal{F}$ on a closed manifold, there always exists a bundle-like metric for $\mathcal{F}$ such that the mean curvature form $\kappa$ is a basic 1-form. F. Kamber and Ph. Tondeur shows $\kappa$ should be closed \cite{KT2}. Furthermore, if $[\kappa]\in H_B^1(\mathcal{F})$ is trivial, then by a suitable conformal change to $g_{T\mathcal{F}}$, the bundle-like metric $g$ can be modified to be a taut metric \cite{KT2}. Since we have an injective map
 $$
  H_B^1(\mathcal{F})\rightarrow H^1(M),
 $$
closed and simply connected Riemannian foliation is always taut \cite{Ton}. Hence, we have the following corollary:

 \begin{cor}
  For a codimension four Riemannian foliation $\mathcal{F}$ on a closed and simply connected smooth manifold $M$, if $J$ is a transversal almost complex structure satisfying $\theta(V)J=0$ for any $V\in \Gamma T\mathcal{F}$, then $J$ is $C^{\infty}$-pure and full.
 \end{cor}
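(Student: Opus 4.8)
The plan is to derive this Corollary directly from Theorem \ref{main} by showing that the hypothesis of simple connectivity, together with codimension four and the Riemannian foliation structure, forces the foliation to be taut. Once tautness is established, the transversal almost complex structure $J$ with $\theta(V)J=0$ satisfies exactly the hypotheses of Theorem \ref{main}, and the conclusion that $J$ is $C^\infty$-pure and full follows immediately. So the entire content of the proof is the implication ``closed, simply connected Riemannian foliation $\Rightarrow$ taut.''

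\textbf{First} I would invoke D. Dom\'{\i}nguez's theorem \cite{Do}, cited in the paragraph preceding the statement, to select a bundle-like metric $g$ for which the mean curvature 1-form $\kappa$ is basic, i.e. $\kappa \in \Omega_B^1(\mathcal{F})$. \textbf{Next} I would use the result of Kamber--Tondeur \cite{KT2} that for such a metric $\kappa$ is $d_B$-closed, so it determines a basic cohomology class $[\kappa] \in H_B^1(\mathcal{F})$. \textbf{The key step} is to show this class vanishes. For this I would use the natural map
 $$
  H_B^1(\mathcal{F}) \to H^1(M;\mathbb{R})
 $$
sending a basic class to its de Rham class, which is injective (a basic 1-form is closed as a basic form iff it is closed as an ordinary form, and a basic 1-form that is an ordinary coboundary is a basic coboundary, as explained in \cite{Ton}). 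Since $M$ is simply connected, $H^1(M;\mathbb{R}) = 0$, so injectivity forces $[\kappa] = 0$ in $H_B^1(\mathcal{F})$.

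\textbf{Finally}, once $[\kappa]=0$, the second part of the Kamber--Tondeur result \cite{KT2} applies: a suitable conformal change of the leafwise metric $g_{T\mathcal{F}}$ modifies $g$ into a genuinely taut bundle-like metric, one whose mean curvature form vanishes identically. At this point $\mathcal{F}$ is a codimension four \emph{taut} Riemannian foliation on the closed manifold $M$, and $J$ is a transversal almost complex structure with $\theta(V)J = 0$ for all $V \in \Gamma T\mathcal{F}$. Applying Theorem \ref{main} verbatim yields that $J$ is $C^\infty$-pure and full.

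\textbf{The main obstacle} is conceptual rather than computational: one must be careful that the injectivity of $H_B^1(\mathcal{F}) \to H^1(M;\mathbb{R})$ genuinely holds in degree one for a general Riemannian foliation, since this is the only place where simple connectivity enters. This is precisely the standard fact recorded in \cite{Ton}, so no new work is required; the remaining steps are all direct citations of Dom\'{\i}nguez \cite{Do} and Kamber--Tondeur \cite{KT2}, chained together to verify the tautness hypothesis of Theorem \ref{main}.
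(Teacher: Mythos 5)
Your proposal is correct and follows exactly the argument the paper gives in the paragraph preceding the corollary: Dom\'{\i}nguez's theorem to get a bundle-like metric with basic $\kappa$, Kamber--Tondeur for closedness and the conformal modification to a taut metric once $[\kappa]=0$, the injectivity of $H_B^1(\mathcal{F})\rightarrow H^1(M)$ combined with simple connectivity to kill $[\kappa]$, and finally Theorem \ref{main}. There is nothing to add; the chain of citations and the point where simple connectivity enters are identified precisely as in the paper.
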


\section{Bounds on $h^{\pm}_J$}
Under the condition of Theorem \ref{main} and by (\ref{selfdual}), we have
 $$
  h^{+}_J+h^{-}_J=b^2_B=b_B^{+}+b_B^{-}.
 $$
Furthermore, by relations (\ref{rel}), the following inequalities holds:
 \begin{eqnarray}\label{inequ}
  h^{+}_J\geq b_B^{-},~ h^{-}_J\leq b_B^{+}.
 \end{eqnarray}
This can be strengthened as follows:

 \begin{lem}\label{sinequ}
  If $(M,\mathcal{F},g,J,F)$ is a closed codimension four almost Hermitian taut Riemannian foliation. Assume that the harmonic part $F_h$ of the transversal Hodge decomposition of $F$ is not identically zero. Then
   $$
    h_J^{+}\geq b_B^{-}+1,~~h_J^{-}\leq b_B^{+}-1.
   $$
 \end{lem}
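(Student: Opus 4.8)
The plan is to produce one $J$-invariant closed basic 2-form beyond the $b^-_B$-dimensional family already available in (\ref{inequ}), and to pin that extra class to the fundamental form $F$. Recall that the baseline inequality $h^+_J \geq b^-_B$ comes from the inclusion $\mathcal{H}^-_B \subset \Lambda^-_{g_Q} \subset \Lambda^+_J$ provided by (\ref{rel}): every anti-self-dual harmonic basic 2-form is closed and $J$-invariant, and harmonicity makes $\omega \mapsto [\omega]$ injective, so $\mathcal{H}^-_B$ embeds as a $b^-_B$-dimensional subspace of $H^+_J(\mathcal{F})$. To gain the extra dimension I would show that $[F_h] \in \mathcal{H}^+_B$ also lies in $H^+_J(\mathcal{F})$ and is independent of this family.

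First I would observe that $F \in \Lambda^+_{g_Q}$ by (\ref{rel}), so $F \in \Omega^+_B$, and write its basic Hodge decomposition $F = F_h + d_B\theta + \delta_B\Psi$. Since the metric is taut ($\kappa = 0$), $\overline{\ast}$ commutes with $\Delta_B$, so the harmonic projection respects self-duality; concretely, Lemma \ref{Hodge1} gives that $F_h = F - 2(d_B\theta)^+_B$ is harmonic and, being $F$ minus a self-dual form, self-dual, whence $F_h \in \mathcal{H}^+_B$. The same lemma shows that the basic 2-form $\widetilde{F} := F + 2(d_B\theta)^-_B = F_h + 2 d_B\theta$ is closed and cohomologous to $F_h$.

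The crux is that $\widetilde{F}$ is $J$-invariant: by (\ref{rel}) we have $F \in \Lambda^+_J$ (the summand $\mathbb{R}F$) and $(d_B\theta)^-_B \in \Lambda^-_{g_Q} \subset \Lambda^+_J$, so both summands of $\widetilde{F}$ are $J$-invariant. Hence $\widetilde{F} \in \mathcal{Z}^+_J$ and $[F_h] = [\widetilde{F}] \in H^+_J(\mathcal{F})$. Under the hypothesis $F_h \not\equiv 0$, the class $[F_h]$ is nonzero, since a nonzero harmonic form represents a nonzero basic cohomology class. Because $[F_h]$ lies in the $\mathcal{H}^+_B$-summand of the orthogonal decomposition (\ref{selfdual}) while the $b^-_B$ classes above lie in the $\mathcal{H}^-_B$-summand, it is linearly independent of them. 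Adjoining it yields $b^-_B + 1$ independent classes in $H^+_J(\mathcal{F})$, so $h^+_J \geq b^-_B + 1$. Finally, Theorem \ref{main} gives $h^+_J + h^-_J = b^2_B = b^+_B + b^-_B$, and substituting the bound on $h^+_J$ produces $h^-_J \leq b^+_B - 1$. The only genuinely delicate point is the $J$-invariance of the corrected representative $\widetilde{F}$, which rests entirely on the inclusion $\Lambda^-_{g_Q} \subset \Lambda^+_J$ from (\ref{rel}); the rest is bookkeeping within the Hodge decomposition together with the orthogonality in (\ref{selfdual}).
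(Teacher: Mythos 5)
Your proposal is correct and takes essentially the same route as the paper's own proof: both pass to the basic Hodge decomposition $F=F_h+d_B\theta+\delta_B\Psi$, use Lemma \ref{Hodge1} together with the inclusions $\mathbb{R}F\subset\Lambda^+_J$ and $\Lambda^-_{g_Q}\subset\Lambda^+_J$ from (\ref{rel}) to exhibit $F+2(d_B\theta)^-_B=F_h+2d_B\theta$ as a closed $J$-invariant basic representative of the nonzero self-dual harmonic class $[F_h]$, and then obtain $h^-_J\leq b^+_B-1$ from $h^+_J+h^-_J=b^+_B+b^-_B$. You have simply made explicit the details (self-duality of $F_h$, injectivity of $\mathcal{H}^-_B\hookrightarrow H^+_J$, independence via the decomposition (\ref{selfdual})) that the paper's one-line argument leaves implicit.
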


  \begin{proof}
   Let $F=F_h+\mathrm{d}\theta+\delta\Psi$ be the transversal Hodge decomposition of $F$, then $F+2(\mathrm{d}\theta)^{-}$ is a closed $J$-invariant basic 2-form, and $[F_h+2\mathrm{d}\theta]\in H^{+}_B\cap H^{-}_B$ is nontrivial since $F_h$ is not identically zero.
  \end{proof}

A more specific case is when $F$ is closed, i.e. the manifold $M$ in Lemma \ref{sinequ} is transversal almost K\"{a}hler, we let $\omega=F$.
 \begin{thm}
  If $(M,\mathcal{F},g,J,\omega)$ is taut transversal almost K\"{a}hler of codimension four, then
   $$
    h_J^{+}\geq b_B^{-}+1,~~h_J^{-}\leq b_B^{+}-1.
   $$
 \end{thm}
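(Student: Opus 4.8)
The plan is to deduce this theorem directly from Lemma \ref{sinequ}. Since the structure is transversal almost K\"ahler, the transversal fundamental form $\omega = F$ is closed, so the only hypothesis of Lemma \ref{sinequ} left to verify is that the harmonic part $\omega_h$ of the transversal Hodge decomposition of $\omega$ is not identically zero. Once this is established, Lemma \ref{sinequ} yields the two inequalities verbatim.

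To verify $\omega_h \neq 0$, I would first simplify the transversal Hodge decomposition $\omega = \omega_h + d_B\theta + \delta_B\Psi$ using closedness. Since $\omega_h$ is harmonic and $d_B^2 = 0$, one has $d_B\omega = d_B\delta_B\Psi$; pairing against $\Psi$ with the basic scalar product gives $0 = \langle d_B\delta_B\Psi, \Psi\rangle_B = \|\delta_B\Psi\|_B^2$, so $\delta_B\Psi = 0$ and $\omega = \omega_h + d_B\theta$. In particular, if $\omega_h$ were identically zero, then $\omega = d_B\theta$ would be basic-exact.

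I would then rule this out by computing the self-pairing $\int_M \omega \wedge \omega \wedge \chi_{\mathcal{F}}$ in two ways. On one hand, in transverse dimension four a compatible triple $(g,J,\omega)$ satisfies $\omega \wedge \omega = 2\,\vol_Q$ (twice the transverse volume form), so $\omega\wedge\omega\wedge\chi_{\mathcal{F}}$ is a positive multiple of the Riemannian volume form of $M$ and the integral is strictly positive. On the other hand, if $\omega = d_B\theta$, I would integrate by parts exactly as in the proof of Theorem \ref{main}: by Stokes and $d\omega = 0$ one gets $\int_M d_B\theta\wedge\omega\wedge\chi_{\mathcal{F}} = \int_M \theta\wedge\omega\wedge d\chi_{\mathcal{F}}$, and tautness ($\kappa = 0$) gives $d\chi_{\mathcal{F}} = \varphi_0 \in F^2\Omega^{p+1}$. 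The integrand $\theta\wedge\omega\wedge\varphi_0$ is then the product of a basic $3$-form in $F^3\Omega^3$ with an element of $F^2\Omega^{p+1}$, hence lies in $F^5\Omega^{p+4} = 0$ by (\ref{q+1fil}). Thus the integral vanishes, contradicting its positivity; so $\omega_h \neq 0$, and Lemma \ref{sinequ} applies.

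The step I expect to be the crux is this final contradiction, and within it the filtration/degree bookkeeping that forces $\theta\wedge\omega\wedge\varphi_0$ into $F^5\Omega^{p+4}=0$; this is precisely where codimension four and tautness enter, and it reuses the integration-by-parts mechanism already set up in the proof of Theorem \ref{main}. The remaining ingredients, namely the Hodge simplification to $\omega = \omega_h + d_B\theta$ and the pointwise identity $\omega\wedge\omega = 2\,\vol_Q$, are routine.
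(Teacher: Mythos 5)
Your proposal is correct, but it verifies the key hypothesis by a genuinely different mechanism than the paper. Both proofs reduce the theorem to Lemma \ref{sinequ}; the difference lies in how one checks that the harmonic part of $\omega$ is nonzero. The paper's proof is two lines: tautness ($\kappa=0$) gives $\delta_B=\pm\overline{\ast}\,d_B\,\overline{\ast}$ and $\overline{\ast}\Delta_B=\Delta_B\overline{\ast}$, and since the fundamental form is self-dual, $\omega\in\Omega^{+}_{g_Q}$ by the relations (\ref{rel}), closedness immediately yields $\delta_B\omega=\pm\overline{\ast}\,d_B\,\overline{\ast}\omega=\pm\overline{\ast}\,d_B\omega=0$, so $\omega$ is itself basic harmonic and hence equals its own (manifestly nonzero) harmonic part. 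You instead prove only the weaker fact actually needed, $\omega_h\neq 0$, by a positivity argument: after the standard Hodge-theoretic reduction to $\omega=\omega_h+d_B\theta$ (your pairing argument killing $\delta_B\Psi$ is valid, since $\delta_B$ is by definition the formal adjoint of $d_B$ for the basic scalar product), you observe that if $\omega$ were basic-exact then the strictly positive self-pairing $\int_M\omega\wedge\omega\wedge\chi_{\mathcal{F}}=2\vol(M)$ would vanish by Stokes, tautness ($d\chi_{\mathcal{F}}=\varphi_0\in F^2\Omega^{p+1}$), and the filtration bound $F^{5}\Omega^{p+4}=0$ from (\ref{q+1fil}) --- exactly the integration-by-parts mechanism already deployed in the proof of Theorem \ref{main}, and your degree bookkeeping there is right ($\theta\wedge\omega\in F^3\Omega^3$, so $\theta\wedge\omega\wedge\varphi_0\in F^5\Omega^{p+4}=0$). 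What each approach buys: the paper's argument is shorter and yields the stronger conclusion that $\omega$ is harmonic on the nose (so $F_h=\omega$), exploiting self-duality of the fundamental form, which is special to transverse dimension four with a compatible metric; your argument never uses $\overline{\ast}\omega=\omega$ and establishes the more robust and independently interesting fact that a transversal symplectic form on a codimension-four taut Riemannian foliation is never basic-exact, i.e.\ $[\omega]\neq 0$ in $H^2_B(\mathcal{F};\mathbb{R})$, which would survive even for a closed transversally nondegenerate $2$-form not assumed self-dual with respect to $g_Q$.
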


  \begin{proof}
   Since $g$ is taut, $\overline{\ast}\Delta=\Delta\overline{\ast}$. Hence, $\mathrm{d}\omega=0$ and $\omega\in\Omega^{+}_g$ induces that $\delta_B\omega=0$, i.e., $\omega$ is basic harmonic itself.
  \end{proof}

\vspace{0.5cm}\noindent\textbf{Acknowledgments}.  The author would like to thank professor Hongyu Wang for useful discussion. J.R. Zhou is partially supported by a PRC grant NSFC  11771377 and the Natural Science Foundation of Jiangsu Province(BK20191435).

\end {document}